\newcommand{\TITLE}{Fixed points of Lie group actions on moduli spaces}
\newcommand{\SUBTITLE}{A tale of two actions}
\newcommand{\AUTHOR}{C. J. Lang}
\providecommand{\keywords}[1]
{
  {\small	
  \textbf{\textit{Keywords---}} #1}
}
\newcommand{\citecomment}[2][]{\citen{#2}#1\citevar}
\newcommand{\citeone}[1]{\citecomment{#1}}
\newcommand{\citetwo}[2][]{\citecomment[,~#1]{#2}}
\newcommand{\citevar}{\@ifnextchar\bgroup{;~\citeone}{\@ifnextchar[{;~\citetwo}{]}}}
\newcommand{\citefirst}{\@ifnextchar\bgroup{\citeone}{\@ifnextchar[{\citetwo}{]}}}
\newtheorem{theorem}{Theorem}
\numberwithin{theorem}{section}
\newtheorem{note}[theorem]{Note}
\newtheorem{prop}[theorem]{Proposition}
\newtheorem{example}[theorem]{Example}
\begin{document}

\pagestyle{plain}
\pagenumbering{arabic}

\title{\TITLE\\[0.2em]\smaller{}\SUBTITLE}
\author{\AUTHOR\thanks{E-mail address: cjlang@uwaterloo.ca}}
\affil{Department of Pure Mathematics \\ 200 University Avenue West \\ University of Waterloo, Canada \\ N2L 3G1}
\date{January 14, 2025} 
\maketitle
\begin{abstract}
In this paper, we examine Lie group actions on moduli spaces (sets themselves built as quotients by group actions) and their fixed points. We show that when the Lie group is compact and connected, we obtain a linear constraint. This constraint makes the problem of finding fixed points one of representation theory, greatly simplifying the search for such points. We obtain a similar result when the Lie group is one-dimensional. For compact and disconnected Lie groups, we show that we need only additionally check a finite number of points. Finally, we show that the subgroup fixing an equivalence class in the moduli space is a compact Lie subgroup.
\end{abstract}
\keywords{Lie theory, moduli space, representation theory}

\section{Introduction}\label{sec:intro}
In this paper, we examine Lie group actions on moduli spaces (sets themselves built as quotients by group actions) and their fixed points. While there are countless examples of people examining fixed points of group actions, the literature lacks a systematic study of actions on moduli spaces, where identifying orbits is more challenging. In this paper, we begin to rectify this by studying the fixed points of Lie group actions on moduli spaces.

In particular, we prove two main results: Theorem~\ref{thm:mainthm} and Proposition~\ref{prop:R}, the former appearing in my thesis~\cite[Theorem~1.1.1]{lang_thesis_2024}. These results tell us that these fixed points must satisfy a linear constraint and the problem of finding such points is one of representation theory.

Before stating these results precisely, we establish our notation. Let $M$ be a smooth manifold and $G$ a Lie group acting smoothly on $M$. From this Lie group action, we obtain a Lie algebra action. If we are dealing with right (left) group actions, we obtain Lie algebra (anti-)homomorphisms taking value in $\mathfrak{X}(M)$. Given $y\in\mathrm{Lie}(G)$ and $A\in M$, the Lie algebra action is given by $y.A:=\frac{d}{dt}\bigr|_{t=0}\mathrm{exp}(ty)\cdot A\in T_AM$. 

Given this notation, we can introduce our results. Theorem~\ref{thm:mainthm} deals with compact Lie groups.
\begin{theorem}
Let $\mathcal{X}$ be a smooth manifold, $\mathcal{G}$ a compact Lie group, and $\mathcal{S}$ a compact, connected Lie group. Suppose that $\mathcal{G}$ and $\mathcal{S}$ act smoothly on $\mathcal{X}$ on the left and the two actions commute. We have that $[A]\in\mathcal{X}/\mathcal{G}$ is fixed by $\mathcal{S}$ if and only if there is some Lie algebra homomorphism $\rho\colon\mathrm{Lie}(\mathcal{S})\rightarrow \mathrm{Lie}(\mathcal{G})$ such that, for all $x\in\mathrm{Lie}(\mathcal{S})$, \label{thm:mainthm}
\begin{equation}
x.A+\rho(x).A=0.\label{eq:maineq}
\end{equation}
\end{theorem}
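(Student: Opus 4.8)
The plan is to prove the two implications separately: the ``if'' direction by a direct exponentiation argument using only connectedness of $\mathcal{S}$, and the ``only if'' direction by splitting an exact sequence of Lie algebras, where compactness enters decisively. Throughout I would work with the product group $\mathcal{S}\times\mathcal{G}$ acting smoothly on $\mathcal{X}$ via $(s,g)\cdot A = s\cdot g\cdot A$, which is well-defined and smooth precisely because the two actions commute, and whose fundamental vector field at $A$ associated to $(x,y)\in\mathrm{Lie}(\mathcal{S})\oplus\mathrm{Lie}(\mathcal{G})$ is $x.A+y.A$.

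For the ``if'' direction, suppose such a $\rho$ exists. Then equation~\eqref{eq:maineq} says exactly that the fundamental vector field of $(x,\rho(x))$ vanishes at $A$; since $A$ is a zero of this field it is a fixed point of its flow, so $\exp(tx)\cdot\exp(t\rho(x))\cdot A = A$ for all $t$. Commuting the two factors gives $\exp(tx)\cdot A = \exp(-t\rho(x))\cdot A\in\mathcal{G}\cdot A$, so each one-parameter subgroup of $\mathcal{S}$ keeps $A$ inside its $\mathcal{G}$-orbit. Because $\mathcal{S}$ is connected, every $s\in\mathcal{S}$ is a finite product of such exponentials, and repeatedly using commutativity to push the resulting $\mathcal{G}$-elements to one side shows $s\cdot A\in\mathcal{G}\cdot A$; hence $[A]$ is fixed by $\mathcal{S}$.

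For the ``only if'' direction I would reformulate fixity through the stabilizer $\mathcal{K} := \{(s,g)\in\mathcal{S}\times\mathcal{G} : s\cdot g\cdot A = A\}$, a closed, hence compact, Lie subgroup of the compact group $\mathcal{S}\times\mathcal{G}$. A short computation shows that the projection $\pi\colon\mathcal{K}\to\mathcal{S}$ has image exactly the stabilizer of $[A]$ in $\mathcal{S}$, so $[A]$ being fixed by $\mathcal{S}$ is equivalent to $\pi$ being surjective. As a surjective Lie group homomorphism is a submersion, $d\pi\colon\mathrm{Lie}(\mathcal{K})\to\mathrm{Lie}(\mathcal{S})$ is then onto, with kernel the infinitesimal stabilizer $\{(0,y) : y\in\mathrm{Lie}(\mathcal{G}),\ y.A=0\}$. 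I would then identify any Lie algebra section $\sigma$ of $d\pi$ with a map of the form $x\mapsto(x,\rho(x))$: the homomorphism property of $\sigma$ forces $\rho$ to be a Lie algebra homomorphism, while membership of $(x,\rho(x))$ in $\mathrm{Lie}(\mathcal{K})$ is precisely equation~\eqref{eq:maineq}.

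The crux is therefore the existence of a Lie algebra section, and this is where I expect the main difficulty and where compactness is essential. Since $\mathcal{S}\times\mathcal{G}$ is compact, $\mathrm{Lie}(\mathcal{S})\oplus\mathrm{Lie}(\mathcal{G})$ carries an $\mathrm{Ad}$-invariant inner product, which restricts to an $\mathrm{ad}$-invariant inner product on $\mathrm{Lie}(\mathcal{K})$. The kernel of $d\pi$ is an ideal of $\mathrm{Lie}(\mathcal{K})$, so its orthogonal complement $\mathfrak{m}$ is again an ideal; then $d\pi|_{\mathfrak{m}}\colon\mathfrak{m}\to\mathrm{Lie}(\mathcal{S})$ is a Lie algebra isomorphism and $\sigma := (d\pi|_{\mathfrak{m}})^{-1}$ is the desired section. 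The remaining points---that a surjective homomorphism is a submersion and that the infinitesimal stabilizer is exactly the kernel of the action---are standard, so the real content is this reductivity argument, which would fail without the compactness hypothesis guaranteeing the invariant inner product.
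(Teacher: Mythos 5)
Your proposal is correct, and its core coincides with the paper's: you form the stabilizer $\mathcal{K}=S_A\subseteq\mathcal{S}\times\mathcal{G}$, observe that fixity of $[A]$ is equivalent to surjectivity of the projection onto $\mathcal{S}$, and manufacture $\rho$ by splitting off $\ker(d\pi)$ orthogonally with respect to an invariant inner product---this is exactly the paper's decomposition $\mathrm{Lie}(S_A)=L\oplus\ker(\phi)$, with the same ad-invariance computation showing the complement is an ideal. Where you genuinely diverge is in the surrounding machinery, and your route is leaner in two ways. First, after obtaining the Lie algebra section, the paper runs a longer argument through the universal cover $\tilde{\mathcal{S}}$ and the Homomorphisms Theorem (constructing $\Psi$ and $G$ with $\pi_1|_{S_A}\circ\Psi\circ G=\pi$) in order to conclude $\exp(\theta x)\cdot\exp(\theta\rho(x))\cdot A=A$; you get this at once from the identification of $\mathrm{Lie}(\mathcal{K})$ with the infinitesimal stabilizer $\{(x,y)\mid x.A+y.A=0\}$, so that elements in the image of the section exponentiate into $\mathcal{K}$. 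That identification is the one point carrying the weight of the paper's covering-space argument, so state it carefully: the containment of the infinitesimal stabilizer in $\mathrm{Lie}(\mathcal{K})$ is precisely the flow argument you already give, so nothing is missing. Second, in the ``if'' direction the paper invokes surjectivity of the exponential map, hence compactness of $\mathcal{S}$; you instead write an arbitrary $s\in\mathcal{S}$ as a finite product of exponentials and push the compensating $\mathcal{G}$-factors across by commutativity. This buys a slightly stronger statement: your ``if'' implication holds for any connected $\mathcal{S}$ and any $\mathcal{G}$, with compactness needed only for the ``only if'' direction, where it furnishes the invariant inner product. (A cosmetic difference: you restrict an $\mathrm{Ad}$-invariant product from the compact ambient group $\mathcal{S}\times\mathcal{G}$, while the paper uses compactness of $S_A$ itself; both are valid, since $S_A$ is closed in a compact group.)
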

When $\mathcal{S}$ is one-dimensional, using a similar method, we obtain the same result even when we no longer have compactness. 
\begin{prop}
Let $\mathcal{X}$ be a smooth manifold, $\mathcal{G}$ a Lie group, and $\mathcal{S}$ a connected, one-dimensional Lie group (isomorphic to either $S^1$ or $\mathbb{R}$). Suppose that $\mathcal{G}$ and $\mathcal{S}$ act smoothly on $\mathcal{X}$ on the left and the two actions commute. We have that $[A]\in\mathcal{X}/\mathcal{G}$ is fixed by $\mathcal{S}$ if and only if there is some $\rho\in\mathrm{Lie}(\mathcal{G})$ such that, for all $t\in\mathbb{R}$, \label{prop:R}
\begin{equation}
t.A+t\rho.A=0.\label{eq:R}
\end{equation}
\end{prop}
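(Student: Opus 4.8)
The plan is to reduce everything to the single one-parameter subgroup generating $\mathcal{S}$ and to the fundamental vector fields of the two commuting actions. Since $\mathcal{S}$ is connected and one-dimensional, $\mathrm{Lie}(\mathcal{S})\cong\mathbb{R}$; I would fix a generator $y$ and identify $t\in\mathbb{R}$ with $ty\in\mathrm{Lie}(\mathcal{S})$, so that $t.A=t(y.A)$ and the displayed relation~\eqref{eq:R} becomes equivalent to the single equation $y.A+\rho.A=0$. Because the two actions commute, $\mathcal{S}$ descends to an action on $\mathcal{X}/\mathcal{G}$, and $\exp$ is surjective onto the connected one-dimensional group $\mathcal{S}$; hence ``$[A]$ is fixed by $\mathcal{S}$'' is equivalent to ``$\exp(ty)\cdot A\in\mathcal{G}\cdot A$ for every $t\in\mathbb{R}$''. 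The central tool throughout is that the flow of the fundamental vector field of a left action associated to $z$ is exactly $p\mapsto\exp(tz)\cdot p$, together with the observation that combining the two commuting actions yields a smooth action of $\mathcal{G}\times\mathcal{S}$ whose fundamental vector field at $(\rho,y)$ evaluates at $A$ to $\rho.A+y.A$.

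For the backward direction I would argue as follows. Given $\rho$ satisfying~\eqref{eq:R}, consider $(\rho,y)\in\mathrm{Lie}(\mathcal{G}\times\mathcal{S})$; by the equation its fundamental vector field vanishes at $A$. Since the integral curve of that vector field through $A$ is $t\mapsto(\exp(t\rho),\exp(ty))\cdot A$ and $A$ is now an equilibrium, uniqueness of integral curves forces $(\exp(t\rho),\exp(ty))\cdot A=A$ for all $t$. Rearranging gives $\exp(ty)\cdot A=\exp(-t\rho)\cdot A\in\mathcal{G}\cdot A$, so $[A]$ is fixed by $\mathcal{S}$.

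The forward direction is where the real work lies. Assuming $[A]$ is fixed, the curve $c(t):=\exp(ty)\cdot A$ lies entirely in the orbit $\mathcal{G}\cdot A$. I would like to write $c(t)=g(t)\cdot A$ for a differentiable path $g$ in $\mathcal{G}$ with $g(0)=e$, and then differentiate the orbit map $g\mapsto g\cdot A$ at $e$ to get $y.A=c'(0)=(g'(0)).A$, so that $\rho:=-g'(0)$ satisfies $y.A+\rho.A=0$, i.e.~\eqref{eq:R}. The hard part is producing such a differentiable lift $g$: a priori $g(t)$ is only known to exist pointwise, with ambiguity coming from the stabilizer. Here I would invoke the standard structure of orbits: $\mathcal{G}\cdot A$ is an initial (weakly embedded) immersed submanifold of $\mathcal{X}$ diffeomorphic to the homogeneous space $\mathcal{G}/\mathcal{G}_A$ (with $\mathcal{G}_A$ the stabilizer of $A$), so the smooth curve $c$ into $\mathcal{X}$ is in fact smooth as a curve into the orbit, and the orbit map $\mathcal{G}\to\mathcal{G}\cdot A$ is a surjective submersion. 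Choosing a local smooth section $\sigma$ of this submersion near $A$ with $\sigma(A)=e$ (equivalently, applying the constant-rank theorem to the orbit map), the composite $g(t):=\sigma(c(t))$ is the desired lift for $t$ near $0$; since only $g'(0)$ is needed, a local lift suffices.

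I would close by noting that, unlike the compact case of Theorem~\ref{thm:mainthm}, no averaging or compactness of $\mathcal{G}$ is used here: the one-dimensionality of $\mathcal{S}$ is exactly what lets us test fixedness on the single one-parameter subgroup $\exp(ty)$ and package the obstruction into the one vector $\rho=-g'(0)$, with the (automatically Lie-algebraic) assignment $t\mapsto t\rho$ on the abelian algebra $\mathrm{Lie}(\mathcal{S})$ playing the role that the homomorphism $\rho$ plays in Theorem~\ref{thm:mainthm}.
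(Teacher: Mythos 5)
Your proof is correct, but its forward direction takes a genuinely different route from the paper's. The paper works entirely on the group side: it forms the stabilizer $S_A=\{(s,g)\mid s\cdot g\cdot A=A\}\subseteq\mathcal{S}\times\mathcal{G}$, a closed (hence Lie) subgroup, notes that surjectivity of the projection $\pi_1|_{S_A}\colon S_A\rightarrow\mathcal{S}$ together with the isomorphism theorems forces its differential $\phi$ to be nonzero (since $\mathrm{Lie}(\mathcal{S})\simeq\mathbb{R}$), picks $\xi=(\xi_0,\xi_1)\in\mathrm{Lie}(S_A)$ with $\xi_0\neq 0$, and then runs the universal-cover/Homomorphisms-Theorem argument of Theorem~\ref{thm:mainthm} on the one-dimensional subgroup generated by $\xi$, finally differentiating $\exp(t\xi_0)\cdot\exp(t\xi_1)\cdot A=A$ to obtain \eqref{eq:R}. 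You instead work on the manifold side: the curve $c(t)=\exp(ty)\cdot A$ lies in the orbit $\mathcal{G}\cdot A$, and you build a differentiable lift $g(t)=\sigma(c(t))$ through a local smooth section $\sigma$ of the orbit submersion $\mathcal{G}\rightarrow\mathcal{G}\cdot A\simeq\mathcal{G}/\mathcal{G}_A$, then differentiate to exhibit $\rho=-g'(0)$ explicitly. Your route is more concrete and avoids universal covers and the Subgroups--Subalgebras and Homomorphisms Theorems entirely, at the cost of a heavier geometric input (see below); it also makes transparent why one-dimensionality matters, since for higher-dimensional $\mathcal{S}$ this pointwise lifting gives no homomorphism property for $\rho$, which is exactly what the paper's $S_A$-based argument supplies in Theorem~\ref{thm:mainthm}. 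Your backward direction --- the fundamental vector field of $(\rho,y)$ for the combined $\mathcal{G}\times\mathcal{S}$-action vanishes at $A$, so uniqueness of integral curves gives $(\exp(t\rho),\exp(ty))\cdot A=A$ for all $t$ --- is the same idea as the paper's computation that $F(\theta)=\exp(\theta x)\cdot\exp(\theta\rho)\cdot A$ has vanishing derivative, just packaged through the flow property of fundamental vector fields.

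One step deserves care: the lifting hinges on the fact that $c$, a smooth curve into $\mathcal{X}$ with image in the orbit, is smooth \emph{as a curve into the orbit}. That is the theorem that orbits are initial (weakly embedded) submanifolds, and it is \emph{not} equivalent to ``applying the constant-rank theorem to the orbit map,'' as your parenthetical suggests. The constant-rank theorem gives only local plaques; when the orbit is not embedded --- e.g.\ a dense line in the torus, precisely the groups $R_t$, $t\notin\mathbb{Q}$, appearing in Section~\ref{subsec:closed} --- one must still rule out that an ambient smooth curve wanders between plaques non-smoothly, which requires the countability/Baire argument behind the initiality theorem (found, for instance, in Kol\'a\v{r}--Michor--Slov\'ak, not in the Lee reference the paper relies on). Since that theorem is standard and true, your proof stands; just cite it as the key ingredient rather than the constant-rank theorem.
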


Theorem~\ref{thm:mainthm} and Proposition~\ref{prop:R} are the result of previous work finding novel symmetric topological solitons in gauge theories. However, they are general results that can also be used outside of gauge theory to easily find fixed points of Lie group actions on moduli spaces. The rest of this section is devoted to motivating these results and ends with an outline of this paper. Throughout the following, we borrow motivation from my thesis~\cite[\S1.1]{lang_thesis_2024}.

Broadly speaking, gauge theory involves two ingredients: objects (solutions to some equation) and a gauge action (a group action preserving solutions to the equation). The underlying theme is that we only care about the object up to the gauge action. Denoting the set of objects by $\mathcal{X}$ and the gauge group $\mathcal{G}$, in gauge theory we only care about the moduli space $\mathcal{X}/\mathcal{G}$. 

The classical example of a gauge theory is electrodynamics. Here, the objects are one-forms representing electromagnetic potentials and the gauge group is the group of smooth maps $g\colon\mathbb{R}^4\rightarrow \mathrm{U}(1)$ acting on the one-forms via $g\cdot A:=gAg^{-1}-(dg)g^{-1}$. In this case, we are interested in the curvature form $F_A=dA+A\wedge A$, as it encodes the electric and magnetic fields of the system. We see that $F_{g\cdot A}=F_A$, so these fields and the underlying Lagrangian are unaffected by the action of the gauge group (gauge transformations). As we can only measure the electric and magnetic fields, not the one-forms, we only care about the one-forms up to the gauge action.

Suppose that we are trying to find examples of some gauge theoretic object $\mathcal{X}$ with gauge group $\mathcal{G}$. We wish to impose a group of symmetries $\mathcal{S}$ to help simplify our search. Naively, we may enforce symmetry on $\mathcal{X}$ directly, by looking for points on $\mathcal{X}$ fixed by $\mathcal{S}$. However, since we only care about elements in the moduli space, we can have a more general notion of symmetry. Indeed, since elements in the same equivalence class are physically identical, we want them to share the same symmetries. Hence, we need the symmetry action to descend to an action on the moduli space, which happens when the gauge and symmetry actions commute. 

We then search for fixed points of the symmetry action on the moduli space. An object $A$ in $\mathcal{X}$ whose equivalence class $[A]$ in $\mathcal{X}/\mathcal{G}$ is fixed by $\mathcal{S}$ is considered symmetric, as acting by $\mathcal{S}$ produces a new object that only differs from $A$ by a gauge transformation. That is, as far as we are concerned, the object is unaffected by $\mathcal{S}$.

While fixed points on the moduli space are more plentiful than fixed points in $\mathcal{X}$, finding them is much harder. Indeed, we are not just looking at objects $A\in\mathcal{X}$ such that for all $s\in\mathcal{S}$, $s\cdot A=A$. Instead, we are looking at objects $A\in\mathcal{X}$ such that for all $s\in\mathcal{S}$, there exists some $g\in\mathcal{G}$ such that $s\cdot A=g\cdot A$. That is, objects where the symmetry action is undone by the gauge action, introducing many new variables to the problem.

Although the literature lacks a systematic study of actions on moduli spaces, there are plenty of results in specific examples. Indeed, a great deal of moduli spaces have been considered using a variety of group actions. I was introduced to the concept through topological solitons. Topological solitons are gauge-theoretic objects that often satisfy complicated non-linear constraints, leaving us with few non-trivial examples of these objects. Symmetries simplify these constraints and make it much easier to find explicit examples, which we can study to uncover properties of these objects in general. While my previous work has focused on finding topological solitons with continuous symmetries, much work has gone into finding examples of such objects---for example instantons, Euclidean monopoles, hyperbolic monopoles, and calorons---with finite symmetries~\cite{leese_stable_1994, hitchin_symmetric_1995, houghton_octahedral_1996, houghton_tetrahedral_1996, houghton_sun_1997, houghton_instanton_1999, singer_symmetric_1999,  sutcliffe_instantons_2004, sutcliffe_platonic_2005, braden_tetrahedrally_2010, allen_adhm_2013, cockburn_symmetric_2014, manton_platonic_2014, cork_symmetric_2018, whitehead_integrality_2022}. However, the methods used to find these examples are specific to the corresponding objects and are not widely applicable beyond these cases.

The aforementioned examples were found using the relationships between the various topological solitons and related objects. For instance, the examples of instantons and hyperbolic monopoles were found using their corresponding ADHM data---quaternionic matrices satisfying non-linear constraints. The topological solitons themselves do not fit into the framework studied in this paper. Indeed, the space of objects and the space of gauge transformations are infinite-dimensional. However, the related objects do fit in this framework and previous work of mine has used these related objects to identify novel examples of instantons, hyperbolic monopoles, and Euclidean monopoles~\cite{charbonneau_construction_2022, lang_hyperbolic_2023, lang_thesis_2024, lang_instantons_2024}.

On a similar note, work has been done looking at instantons with varying circular symmetries~\cite{braam_boundary_1990, manton_platonic_2014, beckett_equivariant_2020}. This work also utilized the relationships between instantons and ADHM data. Unlike the finite symmetries examined above, these circular symmetries were not studied in order to find examples of these objects. Instead, they were studied because instantons with these symmetries correspond to other topological solitons: singular and hyperbolic monopoles. Just as before, the methods used to study instantons with circular symmetries are again very specific to instantons and not applicable broadly.

Similar to the above case of instantons with circular symmetry, many have examined points on moduli spaces fixed by a group action for reasons other than finding examples of objects. For example, given finite group actions, the fixed point locus contained in the moduli spaces of certain stable vector bundles and Higgs bundles have been studied~\cite{narasimhan_generalised_1975, biswas_anti-holomorphic_2015, schaffhauser_finite_2016, biswas_involutions_2019, garcia-prada_involutions_2019, garcia-prada_finite_2020}. In particular, in many cases, the fixed point locus corresponds to the union of moduli spaces of objects equipped with extra structure~\cite{schaffhauser_finite_2016, garcia-prada_finite_2020}.

Above, the fixed point locus in the moduli space is decomposed using group cohomology. This decomposition requires additional structure that is not present in this work. As such, we do not examine the fixed point locus in detail. Instead, we focus on identifying fixed points given a Lie group action and the group of symmetries given a fixed point.

Similar to the above examples, the moduli space of representations of the fundamental group of a compact surface in a connected, semi-simple real Lie group has been studied. Points in this moduli space fixed by finite subgroups of the mapping class group correspond to Higgs bundles equipped with a twisted equivariant structure~\cite{garcia-prada_action_2020}. Another example involving the mapping class group involves the moduli space of irreducible flat $\mathrm{SU}(n)$-connections on an oriented compact surface. For infinitely many values of $n$, there are points on the moduli space fixed by various subgroups of the mapping class group~\cite{andersen_fixed_1997}.

Hilbert schemes are themselves moduli spaces and are closely related to the moduli spaces of instantons~\cite{nakajima_lectures_1999}. The fixed point set of an anti-symplectic involution on the Hilbert scheme of points on a smooth, complex quasi-projective surface is a complex Lagrangian subvariety. The mixed Hodge structures of the cohomolgy groups of this subvariety have been computed and its connected components and their mixed Hodge structures have been classified~\cite{baird_cohomology_2024}. 

Like Hilbert schemes, quiver varieties themselves are moduli spaces. In particular, Nakajima quiver varieties were originally motivated by studying instantons on ALE spaces. The fixed points of the action of abelian reductive subgroups on Nakajima quiver varieties were used to study finite dimensional representations of the quantum affine algebra~\cite{nakajima_quiver_2001}. More recently, moduli spaces of quiver representations have been studied. In particular, the actions of finite groups of quiver automorphisms on these moduli spaces have been examined and the fixed point locus has been decomposed using group cohomology~\cite{hoskins_group_2019}. Additionally, actions of the absolute Galois group of a perfect field on this moduli space have also been considered. Similar to previous cases, the fixed point locus can be decomposed into the union of moduli spaces of twisted quiver representation~\cite{hoskins_rational_2020}.

This work differs from the above in that we examine general Lie group actions instead of finite, abelian, or absolute Galois group actions. This choice of action allows us to study different kinds of symmetries. Moreover, by choosing Lie group actions, we can make use of smoothness to differentiate the equations of symmetry, providing a linear constraint, which greatly simplifies any potential non-linear constraints. Indeed, my previous work using these results have identified many novel topological solitons~\cite{charbonneau_construction_2022, lang_hyperbolic_2023, lang_thesis_2024, lang_instantons_2024}. Additionally, this work differs as we do not restrict ourselves to a specific moduli space and group action.

In Section~\ref{sec:fixed}, we prove Theorem~\ref{thm:mainthm}, which deals with compact Lie groups 
and first appeared in my thesis~\cite[Theorem~1.1.1 \& Appendix~A]{lang_thesis_2024}. 
As compact Lie groups can be realized as matrix Lie groups, the Lie algebra homomorphisms give us representations of Lie algebras, so we reduce the problem to the realm of representation theory. We also prove Proposition~\ref{prop:R}, which allows us to differentiate the equations of symmetry when the symmetry group is one-dimensional but not necessarily compact. In Section~\ref{subsec:example}, we use Proposition~\ref{prop:R} to find symmetric points in an example. For in depth examples of the use of this result, see my thesis as well as my previous papers, where I identify novel examples of symmetric instantons as well as hyperbolic and Euclidean monopoles~\cite{charbonneau_construction_2022, lang_hyperbolic_2023, lang_thesis_2024, lang_instantons_2024}. In Section~\ref{subsec:closed}, we show that the group of symmetries of an object is a closed Lie subgroup, which helps narrow down the kinds of symmetries that we need to study when looking at objects with differing continuous symmetries.

\section{Fixed points}\label{sec:fixed}
We are primarily interested in fixed points when the Lie group acting on the moduli space is compact; however, we also look at a case when the Lie group is not compact.

It turns out that we need only thoroughly check that a point in the moduli space is fixed by one connected component of the Lie group. Otherwise, we need only check that the point in the moduli space is fixed by a single point in each conected component. In what follows, unless indicated otherwise, the side on which a Lie group acts does not affect the result.
\begin{prop}
Let $\mathcal{X}$ be a smooth manifold and let $\mathcal{G}$ as well as $\mathcal{S}$ be Lie groups. Denote the connected component of $\mathcal{S}$ containing the identity by $\mathcal{S}_0\subseteq\mathcal{S}$. Suppose that $\mathcal{G}$ and $\mathcal{S}$ act smoothly on $\mathcal{X}$ and the two actions commute. We have that $[A]\in\mathcal{X}/\mathcal{G}$ is fixed by $\mathcal{S}$ if and only if $[A]$ is fixed by some element in each connected component of $\mathcal{S}$ and $[A]$ is fixed by $\mathcal{S}_0$. \label{prop:mainprop}
\end{prop}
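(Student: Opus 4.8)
The plan is to recast the question entirely in terms of the stabilizer subgroup of the point $[A]$ under the induced action of $\mathcal{S}$ on the quotient. First I would observe that, because the $\mathcal{G}$- and $\mathcal{S}$-actions commute, the $\mathcal{S}$-action descends to a well-defined action on $\mathcal{X}/\mathcal{G}$ via $s\cdot[A]:=[s\cdot A]$; one checks well-definedness by noting that if $B=g\cdot A$ then $s\cdot B=g\cdot(s\cdot A)$ lies in the same $\mathcal{G}$-orbit as $s\cdot A$. With this action in hand, set $H:=\{s\in\mathcal{S}:s\cdot[A]=[A]\}$, the stabilizer of $[A]$. A routine verification shows $H$ is a subgroup of $\mathcal{S}$: it contains the identity, and closure under products and inverses follows from the group-action axioms. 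The statement to prove is then simply that $H=\mathcal{S}$ if and only if $\mathcal{S}_0\subseteq H$ and $H$ meets every connected component of $\mathcal{S}$.

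The forward implication is immediate: if $H=\mathcal{S}$, then certainly $\mathcal{S}_0\subseteq H$ and $H$ meets---indeed equals---every component. The content lies in the reverse direction, and the key structural fact I would invoke is that, for any Lie group, the identity component $\mathcal{S}_0$ is an open normal subgroup whose cosets are precisely the connected components of $\mathcal{S}$. Granting this, suppose $\mathcal{S}_0\subseteq H$ and that $H$ contains a representative $s_i$ of each component $\mathcal{S}_i=s_i\mathcal{S}_0$. Given an arbitrary $s\in\mathcal{S}$, it lies in some component $\mathcal{S}_i$, so $s=s_i h$ for some $h\in\mathcal{S}_0$. Since $s_i\in H$, $h\in\mathcal{S}_0\subseteq H$, and $H$ is closed under multiplication, we get $s\in H$. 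As $s$ was arbitrary, $H=\mathcal{S}$.

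There is no serious analytic obstacle here; the argument is pure group theory once the induced action and its stabilizer have been set up. The one point requiring care---and the only place where the Lie (or even merely topological) group structure enters---is the fact that the connected components are exactly the cosets of $\mathcal{S}_0$, so that a single element of each component, together with all of $\mathcal{S}_0$, recovers that component. I would state this cleanly as a preliminary remark, citing the standard result that the identity component of a Lie group is an open normal subgroup rather than reproving it. Finally, the claim that the side on which $\mathcal{S}$ acts is irrelevant is automatic, since the argument uses only that $H$ is a subgroup, a property insensitive to the left/right convention.
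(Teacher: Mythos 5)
Your proof is correct and follows essentially the same route as the paper's: the paper phrases the key step via the short exact sequence $0\rightarrow\mathcal{S}_0\rightarrow\mathcal{S}\rightarrow\mathcal{S}/\mathcal{S}_0\rightarrow 0$ and writes an arbitrary $x$ in a component as $x=x_iy$ with $y\in\mathcal{S}_0$, which is exactly your decomposition $s=s_ih$ using the fact that components are cosets of the identity component. Your explicit setup of the descended action and the stabilizer subgroup $H$ is a slightly more structured presentation of the same argument, with no substantive difference.
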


\begin{proof}
If $[A]$ is fixed by $\mathcal{S}$, then it is fixed by every element in every connected component of $\mathcal{S}$. Hence, we focus on the other direction. Suppose that $[A]$ is fixed by $\mathcal{S}_0$ and some element in each connected component of $\mathcal{S}$. Let $\pi\colon\mathcal{S}\rightarrow\mathcal{S}/\mathcal{S}_0$ be the quotient map sending an element of the group to its connected component and let $\iota\colon\mathcal{S}_0\hookrightarrow\mathcal{S}$ be the inclusion map. Consider the following short exact sequence of groups: 
\begin{equation*}
0\rightarrow \mathcal{S}_0\xhookrightarrow{\iota}\mathcal{S}\xrightarrow{\pi} \mathcal{S}/\mathcal{S}_0\rightarrow 0.
\end{equation*}

Let $x\in \mathcal{S}$. Specifically, let $x$ be in some connected component $\mathcal{S}_i\subseteq\mathcal{S}$. Let $x_i\in\mathcal{S}_i$ fix $[A]$. As $x$ and $x_i$ are in the same component of $\mathcal{S}$, $\pi(x)=\pi(x_i)$, so $x_i^{-1}x\in\mathrm{ker}(\pi)$. As the above sequence is exact, there is some $y\in\mathcal{S}_0$ such that $x_i^{-1}x=y$. Rearranging, $x=x_iy$. Then we see that as $x_i$ and $y$ fix $[A]$ and we have a group action, $[A]$ is fixed by $x$. As $x$ is arbitrary, $[A]$ is fixed by the whole of $\mathcal{S}$.
\end{proof}

If the symmetry group $\mathcal{S}$ is compact, then it has a finite number of connected components. Indeed, we know that $\pi_0(\mathcal{S})=\mathcal{S}/\mathcal{S}_0$. As $\mathcal{S}/\mathcal{S}_0$ is the continuous image of a compact set, it is compact itself. However, $\pi_0(\mathcal{S})$ consists only of isolated points, so $\mathcal{S}/\mathcal{S}_0$ is finite.

As such, if $\mathcal{S}$ is compact, then we need only check that a point is fixed by $\mathcal{S}_0$ and a finite number of additional points. In this case, $\mathcal{S}_0$ is a compact and connected Lie group, falling under the umbrella of Theorem~\ref{thm:mainthm}.
%
%
%
%
\begin{proof}[Proof of Theorem~\ref{thm:mainthm}]
Suppose that $[A]\in\mathcal{X}/\mathcal{G}$ is fixed by $\mathcal{S}$. Let $S_A\subseteq \mathcal{S}\times \mathcal{G}$ be the stabilizer group of $A$, given by
\begin{equation}
S_A:=\{(s,g)\mid s\cdot g\cdot A=A\}.\label{eq:symmetryS}
\end{equation}
Note that the identity belongs to $S_A$ and the commutativity of the actions imply that $S_A$ is a subgroup. 

As $\mathcal{S}$ is connected, it has a simply-connected universal cover $\tilde{\mathcal{S}}$. Denote the covering map $\pi\colon\tilde{\mathcal{S}}\rightarrow\mathcal{S}$. As $[A]$ is fixed by $\mathcal{S}$, the projection map $\pi_1|_{S_A}\colon S_A\rightarrow \mathcal{S}$ is surjective. Our goal is to find a smooth map $\tilde{\mathcal{S}}\rightarrow S_A$ that, when composed with $\pi_1|_{S_A}$, gives us the covering map $\pi$. This map allows us to differentiate the equation of symmetry defining $S_A$.

We start by showing that $S_A$ is a Lie group. Let $f\colon \mathcal{S}\times\mathcal{G}\rightarrow \mathcal{X}$ be the smooth map 
\begin{equation*}
f(s,g):=s\cdot g\cdot A.
\end{equation*}
Then $S_A=f^{-1}(A)$, meaning $S_A$ is closed. By the Closed-subgroup Theorem, as $S_A$ is a closed subgroup of a Lie group, $S_A$ is a Lie group~\cite[Theorem~20.12]{lee_smooth_2012}. Moreover, as $\mathcal{S}\times\mathcal{G}$ is compact and $S_A$ is closed, $S_A$ is compact.

Consider the Lie algebra homomorphism $\phi:=d_{(e_\mathcal{S},e_\mathcal{G})}(\pi_1|_{S_A})\colon \mathrm{Lie}(S_A)\rightarrow \mathrm{Lie}(\mathcal{S})$. Note that $\mathrm{ker}(\phi)\subseteq\mathrm{Lie}(S_A)$ is an ideal of $\mathrm{Lie}(S_A)$. As $S_A$ is compact, it has a bi-invariant metric corresponding to a $\mathrm{Ad}(S_A)$-invariant inner product on $\mathrm{Lie}(S_A)$. Notably, this inner product satisfies
\begin{equation*}
\langle [x,y],z\rangle =\langle x,[y,z]\rangle.
\end{equation*}

Let $L\subseteq \mathrm{Lie}(S_A)$ be the orthogonal complement to $\mathrm{ker}(\phi)$. We show that $L$ is an ideal and $L\oplus \mathrm{ker}(\phi)=\mathrm{Lie}(S_A)$ as Lie algebras (the bracket between the two ideals vanishes). Suppose that $l\in L$ and $s\in\mathrm{Lie}(S_A)$. We see that for all $x\in\mathrm{ker}(\phi)$, we have $[x,s]\in\mathrm{ker}(\phi)$, and thus
\begin{equation*}
\langle x,[s,l]\rangle=\langle [x,s],l\rangle=0.
\end{equation*}
Hence, $[s,l]\in L$, so $L$ is an ideal. Finally, we see that if $x\in\mathrm{ker}(\phi)$ and $l\in L$, then as both $\mathrm{ker}(\phi)$ and $L$ are ideals, $[l,x]\in L\cap \mathrm{ker}(\phi)$. Hence, $||[l,x]||^2=0$, as it is orthogonal to itself. Hence, $[l,x]=0$, so $\mathrm{Lie}(S_A)=L\oplus\mathrm{ker}(\phi)$. 

By the isomorphism theorems, we know that as $\mathrm{im}(\pi_1|_{S_A})=\mathcal{S}$ is closed,
\begin{equation*}
L\simeq \mathrm{Lie}(S_A)/\mathrm{ker}(\phi)\simeq \mathrm{im}(\phi)=\mathrm{Lie}(\mathrm{im}(\pi_1|_{S_A}))=\mathrm{Lie}(\mathcal{S}).
\end{equation*}
Let $\psi\colon\mathrm{Lie}(\mathcal{S})\rightarrow L$ be the isomorphism.

Let $Y\subseteq S_A$ be the unique connected Lie subgroup corresponding to the Lie algebra $L\subseteq \mathrm{Lie}(S_A)$, whose existence is guaranteed by the Subgroups-subalgebras Theorem~\cite[Theorem~19.26]{lee_smooth_2012}. As $\tilde{\mathcal{S}}$ is simply-connected, the Homomorphisms Theorem tells us that there is a unique Lie group homomorphism $\Psi\colon\tilde{\mathcal{S}}\rightarrow Y$ such that $d_{e_{\tilde{\mathcal{S}}}}\Psi=\psi$~\cite[Theorem~20.19]{lee_smooth_2012}. Consider the map $\pi_1|_{S_A}\circ\Psi\colon\tilde{\mathcal{S}}\rightarrow \mathcal{S}$. We know that $d_{e_{\tilde{\mathcal{S}}}}(\pi_1|_{S_A}\circ\Psi)=\phi\circ\psi$. We show that this map is an isomorphism and we use it to find the smooth map we desire.

Indeed, if $\phi(\psi(x))=0$, then $\psi(x)\in L\cap\mathrm{ker}(\phi)=\{0\}$, so $x=0$, as $\psi$ is an isomorphism. Furthermore, consider $y\in\mathrm{Lie}(\mathcal{S})=\mathrm{im}(\phi)$. Then there is some $x\in\mathrm{Lie}(S_A)$ such that $\phi(x)=y$. We can uniquely write $x=l+z$ for $l\in L$ and $z\in\mathrm{ker}(\phi)$. But then $\phi(l)=y$. As $\psi$ is an isomorphism, there exists $w\in\mathrm{Lie}(\mathcal{S})$ such that $\psi(w)=l$. Therefore, $\phi(\psi(w))=y$, so $\phi\circ\psi$ is a Lie algebra isomorphism. Call the inverse of this map $g$.

By the Homomorphisms Theorem, we know that there is a unique Lie group homomorphism $G\colon \tilde{\mathcal{S}}\rightarrow\tilde{\mathcal{S}}$ such that $d_{e_{\tilde{\mathcal{S}}}}G=g$~\cite[Theorem~20.19]{lee_smooth_2012}. We show that $\Psi\circ G$ is the map we are searching for. Indeed, we have
\begin{equation*}
d_{e_{\tilde{\mathcal{S}}}}(\pi_1|_{S_A}\circ \Psi\circ G)=\phi\circ\psi\circ g=\mathrm{id}_{\mathrm{Lie}(\mathcal{S})}.
\end{equation*}
But $\pi\colon \tilde{\mathcal{S}}\rightarrow \mathcal{S}$ is a Lie group homomorphism whose pushforward at the identity is the identity. By the Homomorphisms Theorem, the two maps must be equal, so $\pi_1|_{S_A}\circ\Psi\circ G=\pi$~\cite[Theorem~20.19]{lee_smooth_2012}.

As $\psi\circ g$ is a Lie algebra homomorphism and $\phi\circ\psi\circ g=\mathrm{id}_{\mathrm{Lie}(\mathcal{S})}$, we have that $\psi\circ g(x)=(x,\rho(x))$ for some Lie algebra homomorphism $\rho\colon\mathrm{Lie}(\mathcal{S})\rightarrow \mathrm{Lie}(\mathcal{G})$. 

For all $x\in\mathrm{Lie}(\mathcal{S})$, we have $\Psi(G(\mathrm{exp}(x)))\in Y\subseteq S_A$. Noting that $\pi_1|_{S_A}\circ\Psi\circ G=\pi$, we see that for all $x\in\mathrm{Lie}(\mathcal{S})$ and $\theta\in\mathbb{R}$, 
\begin{equation*}
\mathrm{exp}(x\theta)\cdot \mathrm{exp}(\rho(x)\theta)\cdot A=A.
\end{equation*} 
Differentiating with respect to $\theta$ and evaluating at $\theta=0$, we obtain
\begin{equation*}
x.A+\rho(x).A=0.
\end{equation*}

Conversely, suppose that for some Lie algebra homomorphism $\rho$, we have for all $x\in\mathrm{Lie}(\mathcal{S})$, $x.A+\rho(x).A=0$. Consider $s\in\mathcal{S}$. As $\mathcal{S}$ is compact and connected, the exponential map is surjective. Hence, there is some $x\in\mathrm{Lie}(\mathcal{S})$ such that $\mathrm{exp}(x)=s$. We show that
\begin{equation*}
F(\theta):=\mathrm{exp}(x\theta)\cdot \mathrm{exp}(\rho(x)\theta)\cdot A,
\end{equation*}
is constant. 

First, recall that the symmetry action is a smooth map $\alpha\colon\mathcal{S}\times\mathcal{X}\rightarrow\mathcal{X}$. We also use the notation $s\cdot A$ to denote $\alpha(s,A)$. Given $s\in\mathcal{S}$, define $\iota_s\colon \mathcal{X}\rightarrow\mathcal{S}\times\mathcal{X}$ by $\iota_s(A):=(s,A)$. As the symmetry action is a left Lie group action, for all $s,s'\in\mathcal{S}$ and $A\in\mathcal{X}$, we have $\alpha(e_\mathcal{S},A)=A$ and 
\begin{equation}
\alpha\circ\iota_s\circ \alpha\circ\iota_{s'}(A)=\alpha(s,\alpha(s',A))=\alpha(ss',A)=\alpha\circ \iota_{ss'}(A).\label{eq:groupaction}
\end{equation}
Note that for $s\in\mathcal{S}$, $\alpha\circ\iota_s\colon\mathcal{X}\rightarrow\mathcal{X}$. For all $A\in\mathcal{X}$, $\alpha\circ\iota_s(A)=\alpha(s,A)=s\cdot A$, so we have $d_A(\alpha\circ\iota_s)\colon T_A\mathcal{X}\rightarrow T_{s.A}\mathcal{X}$. Define the smooth map $\tilde{\alpha}\colon \mathcal{S}\times T\mathcal{X}\rightarrow T\mathcal{X}$ as follows. For $s\in\mathcal{S}$ and $v\in T_A\mathcal{X}$, $\tilde{\alpha}(s,v):=d_A(\alpha\circ\iota_s)(v)\in T_{s.A}\mathcal{X}$. 

As $\alpha$ is a Lie group action of $\mathcal{S}$, so too is $\tilde{\alpha}$. We denote this group action by $s\cdot v:=\tilde{\alpha}(s,v)$. Note that $\alpha\circ\iota_{e_\mathcal{S}}=\mathrm{id}_{\mathcal{X}}$. Then we see that for all $s,s'\in\mathcal{S}$ and $v\in T_A\mathcal{X}$, by \eqref{eq:groupaction}, we have 
\begin{align*}
\tilde{\alpha}(e_\mathcal{S},v)&=d_A(\alpha\circ\iota_{e_\mathcal{S}})(v)=d_A\mathrm{id}_{\mathcal{X}}(v)=v\\
\tilde{\alpha}(s,\tilde{\alpha}(s',v))&=d_{s'.A}(\alpha\circ\iota_s)\circ d_A(\alpha\circ\iota_{s'})(v)\\
&=d_A(\alpha\circ\iota_s\circ \alpha\circ\iota_{s'})(v)\\
&=d_A(\alpha\circ\iota_{ss'})(v)\\
&=\tilde{\alpha}(ss',v).
\end{align*}

We can do the same thing for the gauge group $\mathcal{G}$. Returning to $F(\theta)$, we note that for $x\in\mathrm{Lie}(\mathcal{S})$, we have for any $\theta_0\in\mathbb{R}$, $\mathrm{exp}(\theta x)=\mathrm{exp}(\theta_0x)\cdot\mathrm{exp}((\theta-\theta_0)x)$. Thus, as the actions commute, we have
\begin{align*}
F'(\theta_0)&=\frac{d}{d\theta}\Bigr|_{\theta=\theta_0}\mathrm{exp}(\theta x)\cdot \mathrm{exp}(\theta \rho(x))\cdot A\\
&=\frac{d}{d\theta}\Bigr|_{\theta=\theta_0}\mathrm{exp}(\theta_0 x)\cdot \mathrm{exp}(\theta_0\rho(x))\cdot \mathrm{exp}((\theta-\theta_0)x)\cdot \mathrm{exp}((\theta-\theta_0)\rho(x))\cdot A.
\end{align*}
Letting $t:=\theta-\theta_0$, we have from the induced actions defined above
\begin{align*}
F'(\theta_0)&=\mathrm{exp}(\theta_0 x)\cdot \mathrm{exp}(\theta_0\rho(x))\cdot \left(\frac{d}{dt}\Bigr|_{t=0}\mathrm{exp}(tx)\cdot \mathrm{exp}(t\rho(x))\cdot A\right)\\
&=\mathrm{exp}(\theta_0 x)\cdot \mathrm{exp}(\theta_0\rho(x))\cdot (x.A+\rho(x).A)\\
&=0.
\end{align*}
Therefore, $\mathrm{exp}(x)\cdot \mathrm{exp}(\rho(x))\cdot A=F(1)=F(0)=A$. As $s=\mathrm{exp}(x)$ was arbitrary, $[A]$ is fixed by $\mathcal{S}$.
\end{proof}

\begin{note}
In Theorem~\ref{thm:mainthm}, we assume that the two Lie groups act on the left. Nothing changes in \eqref{eq:maineq} if instead we have two right group actions. However, if we have a mix of left and right group actions, then \eqref{eq:maineq} becomes $x.A-\rho(x).A=0$. This is because, in order to maintain its group structure, the definition of $S_A$ must change depending on which side the Lie groups act on. The same is true for Proposition~\ref{prop:R} and \eqref{eq:R}.\label{note:sign}
\end{note}

As $\mathcal{G}$ is compact in Theorem~\ref{thm:mainthm}, it can be viewed as a matrix Lie group. Hence, the Lie algebra homomorphism gives us a Lie algebra representation. Therefore, the problem of finding fixed points in the moduli space is one of representation theory. 

In some cases, we can get even more information about fixed points using representation theory. Suppose $\mathcal{X}$ is a vector space and $\mathcal{G}$ as well as $\mathcal{S}$ act linearly on $\mathcal{X}$, so that our actions are representations. For a symmetric point $A\in \mathcal{X}$, Theorem~\ref{thm:mainthm} tells us that $\mathrm{Lie}(\mathcal{S})$ acts on $A$ in two ways: through the action of $\mathrm{Lie}(\mathcal{S})$ directly and through the action of $\mathrm{Lie}(\mathcal{G})$ via $\rho$. Hence, there are some representations $(V_1,\lambda_1)$ and $(V_2,\lambda_2)$ of $\mathrm{Lie}(\mathcal{S})$ such that $A$ belongs to a trivial component of $(V_1,\lambda_1)\otimes (V_2,\lambda_2)$. This not only helps us determine the dimension of the vector space of solutions (the number of trivial components), but also tells us what representations $\rho$ to consider, as the tensor product must contain trivial components in order to have non-zero symmetric objects and the tensor product depends on $\rho$.

Theorem~\ref{thm:mainthm} requires compact groups. However, hope is not necessarily lost when you do not have compactness. Indeed, in previous work studying instantons with continuous symmetries, we do not have a compact gauge group. However, we are able to apply the same methods as in the proof of Theorem~\ref{thm:mainthm}. This is because on the subset of the space $\mathcal{X}$ corresponding to instantons, we are able to reduce the gauge group to a compact subgroup~\cite{lang_instantons_2024}.

In Proposition~\ref{prop:mainprop}, we see that when $\mathcal{S}$ is not assumed to be connected, we have two things to check: if $[A]$ is fixed by $\mathcal{S}_0$ and if it is fixed by $\mathcal{S}/\mathcal{S}_0$. 

Theorem~\ref{thm:mainthm} tells us that if $[A]$ is fixed by a compact and connected Lie group, then the gauge component of $S_A$ arises from a Lie algebra representation. We know that as $\mathcal{S}$ is compact, $\mathcal{S}/\mathcal{S}_0$ is a finite group. If $[A]$ is fixed by a finite group, it is not known if the gauge component of $S_A$ arises from a finite group representation.

Using a similar method, we prove Proposition~\ref{prop:R}.

%
\begin{proof}[Proof of Proposition~\ref{prop:R}]
We proceed as in the proof of Theorem~\ref{thm:mainthm}. For $[A]\in\mathcal{X}/\mathcal{G}$, fixed by $\mathcal{S}$, we define $S_A$, which is still a closed Lie subgroup of $\mathcal{S}\times\mathcal{G}$. However, $S_A$ is not necessarily compact.

We still have $\pi_1|_{S_A}\colon S_A\rightarrow \mathcal{S}$ is surjective and we consider $\phi:=d_{(e_\mathcal{S},e_\mathcal{G})}(\pi_1|_{S_A})\colon \mathrm{Lie}(S_A)\rightarrow \mathrm{Lie}(\mathcal{S})$. However, this time, as $S_A$ is not necessarily compact, we do not automatically have a $\mathrm{Ad}(S_A)$-invariant inner product on $\mathrm{Lie}(S_A)$. Instead, suppose, for the sake of contradiction, that $\mathrm{ker}(\phi)=\mathrm{Lie}(S_A)$. By the isomorphism theorems, we know that as $\mathrm{im}\left(\pi_1|_{S_A}\right)=\mathcal{S}$ is closed,
\begin{equation*}
0\simeq \mathrm{Lie}(S_A)/\mathrm{ker}(\phi)\simeq\mathrm{im}(\phi)=\mathrm{Lie}(\mathrm{im}(\pi_1|_{S_A}))=\mathrm{Lie}(\mathcal{S})\simeq\mathbb{R}.
\end{equation*}
Contradiction! 

Hence, $\mathrm{ker}(\phi)\subsetneq\mathrm{Lie}(S_A)$. That is, there exists $\xi:=(\xi_0,\xi_1)\in\mathrm{Lie}(S_A)$ such that $\xi_0\neq 0$, so $\phi(\xi)\neq 0$. Thus, $\phi(\xi)$ spans $\mathrm{Lie}(\mathcal{S})$. Let $\psi\colon\mathbb{R}\rightarrow\langle\xi\rangle\subseteq\mathrm{Lie}(S_A)$ be the Lie algebra homomorphism defined by $\psi(\alpha):=\alpha\xi$.

Let $Y\subseteq S_A$ be the unique connected Lie subgroup corresponding to the Lie algebra $\langle\xi\rangle\subseteq\mathrm{Lie}(S_A)$, whose existence is guaranteed by the Subgroups-subalgebras Theorem~\cite[Theorem~19.26]{lee_smooth_2012}. 

The rest of the proof proceeds as in the proof of Theorem~\ref{thm:mainthm}. Note that $\phi\circ\psi$ is an isomorphism as it is bijective and automatically a Lie algebra homomorphism. Indeed, $\phi(\psi(1))=\phi(\xi)\neq 0$. For the converse direction, the only note is that even when $\mathcal{S}\simeq\mathbb{R}$, the exponential map is surjective.
\end{proof}

In the proof of Proposition~\ref{prop:R}, we see that we cannot use the same technique for other Lie groups, abelian or not. Indeed, using the same notation as in the proof, in general we have the Lie algebra isomorphism $\mathrm{Lie}(S_A)/\mathrm{ker}(\phi)\simeq\mathrm{Lie}(\mathcal{S})$. 

Thus, we can always find elements of $\mathrm{Lie}(S_A)$ whose images under $\phi$ span $\mathrm{Lie}(\mathcal{S})$. However, in general, we do not know that these elements span a Lie algebra in $S_A$. Indeed, we only know that they span a Lie algebra in the quotient $\mathrm{Lie}(S_A)/\mathrm{ker}(\phi)$. The case of $\mathcal{S}$ being one-dimensional is special, as any non-zero element of $\mathrm{Lie}(S_A)$ spans a one-dimensional Lie algebra.

\subsection{An example using Proposition~\ref{prop:R}}\label{subsec:example}
In this section, we construct an example and use Proposition~\ref{prop:R} to find symmetric points. Previous work has used Theorem~\ref{thm:mainthm} and Proposition~\ref{prop:R} to study symmetric gauge theoretic objects in more involved case studies~\cite{charbonneau_construction_2022, lang_hyperbolic_2023, lang_thesis_2024, lang_instantons_2024}.
\begin{example}
Let $\mathcal{X}:=\mathbb{C}^2$ and $\mathcal{G}:= S^1=:\mathcal{S}$. Suppose that we only care about pairs in $\mathbb{C}^2$ up to their modulus and relative phase. That is, suppose that $\mathcal{G}$ acts on $\mathcal{X}$ as ${^{e^{i\theta}} (z,w)}:=\left(e^{i\theta}z,e^{i\theta}w\right)$.

Suppose that $\mathcal{S}$ acts on $\mathcal{X}$ as ${_{e^{i\phi}}(z,w)}:=\left(\cos(\phi)z+\sin(\phi)w,-\sin(\phi)z+\cos(\phi)w\right)$. Note that the actions of $\mathcal{S}$ and $\mathcal{G}$ commute. The only points symmetric under $\mathcal{S}$ are $\{(z,\mp iz)\mid z\in\mathbb{C}\}$.
\end{example}

\begin{proof}
Proposition~\ref{prop:R} tells us that $[(z,w)]\in\mathcal{X}/\mathcal{G}$ is fixed by $\mathcal{S}$ if and only if there is some $\rho\in\mathrm{Lie}(\mathcal{G})=\mathbb{R}$ satisfying \eqref{eq:R}. 

The Lie algebra actions of $\mathcal{S}$ and $\mathcal{G}$ on $\mathcal{X}$ are, respectively, given by
\begin{align*}
{{_\phi}(z,w)}&:=\left.\frac{d}{dt}\right|_{t=0}{_{e^{it\phi}}(z,w)}=(\phi w,-\phi z) \quad\textrm{and} \\
{{^\theta}(z,w)}&:=\left.\frac{d}{dt}\right|_{t=0}{_{e^{it\theta}}(z,w)}=(i\theta z,i\theta w).
\end{align*}

Suppose that $[(z,w)]$ is fixed by $\mathcal{S}$. Then there is some $\rho\in\mathbb{R}$ such that ${{_\phi}(z,w)}+{{^{\rho\phi}}(z,w)}=(0,0)$. Substituting the Lie algebra actions, we have that for all $\phi\in\mathbb{R}$,
\begin{equation*}
(\phi w,-\phi z)+(i\rho\phi z,i\rho\phi w)=(0,0).
\end{equation*}
As this equality holds for all $\phi\in\mathbb{R}$, we have $w=-i\rho z$ and $-z+i\rho w=0$. Substituting the former into the latter, we obtain $z(1-\rho^2)=0$. Thus, $w=z=0$ or $\rho=\pm 1$. Moreover, if $\rho=\pm 1$, then $w=\mp iz$. 

As all symmetric points must satisfy the above constraints, we have that the only points symmetric under $\mathcal{S}$ are $\{(z,\mp iz)\mid z\in\mathbb{C}\}$.
\end{proof}

\subsection{Closed sets}\label{subsec:closed}
Armed with Theorem~\ref{thm:mainthm}, we can examine objects that are fixed by different symmetries. It turns out that we can ignore certain symmetries, as we can investigate their closure instead.
%
%
\begin{prop}
Let $\mathcal{X}$ be a smooth manifold and let both $\mathcal{G}$ and $\mathcal{S}$ be Lie groups. Suppose that $\mathcal{G}$ and $\mathcal{S}$ act on $\mathcal{X}$ and their actions commute. Additionally, suppose that $\mathcal{G}$ acts on $\mathcal{X}$ properly. For $A\in\mathcal{X}$, let $H_A:=\{s\in \mathcal{S}\mid s\cdot [A]=[A]\}$. Then $H_A$ is a closed Lie subgroup of $\mathcal{S}$. Moreover, if $\mathcal{S}$ is compact, then $H_A$ is as well.\label{prop:closed}
\end{prop}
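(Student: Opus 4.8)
The plan is to recognize $H_A$ as the stabilizer of $[A]$ under the $\mathcal{S}$-action induced on the moduli space, check that it is a subgroup, realize it as the preimage of the $\mathcal{G}$-orbit $\mathcal{G}\cdot A$ under a continuous map, and then use properness to deduce that this orbit---and hence $H_A$---is closed. Closedness plus the Closed-subgroup Theorem will give the Lie subgroup structure, and compactness of $\mathcal{S}$ will be inherited immediately.

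First I would verify that $H_A$ is a subgroup of $\mathcal{S}$. Unwinding the definition, $s\in H_A$ exactly when $s\cdot A$ lies in the $\mathcal{G}$-orbit of $A$, i.e. when there is $g\in\mathcal{G}$ with $s\cdot A=g\cdot A$. Closure under the group operations uses the commutativity of the two actions in precisely the way the subgroup check for $S_A$ did in the proof of Theorem~\ref{thm:mainthm}: if $s\cdot A=g\cdot A$ and $s'\cdot A=g'\cdot A$, then $(ss')\cdot A=s\cdot(g'\cdot A)=g'\cdot(g\cdot A)=(g'g)\cdot A$, and similarly $s^{-1}\cdot A=g^{-1}\cdot A$; the identity lies in $H_A$ trivially.

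Next I would make the geometric reduction. Consider the continuous (indeed smooth) orbit map $\sigma\colon\mathcal{S}\to\mathcal{X}$, $\sigma(s):=s\cdot A$, so that $H_A=\sigma^{-1}(\mathcal{G}\cdot A)$. It therefore suffices to show that the orbit $\mathcal{G}\cdot A$ is closed in $\mathcal{X}$, and this is where the properness hypothesis on $\mathcal{G}$ is the essential ingredient. Since the action is proper, the map $\Theta\colon\mathcal{G}\times\mathcal{X}\to\mathcal{X}\times\mathcal{X}$, $\Theta(g,x):=(g\cdot x,x)$, is proper, hence closed (a proper continuous map into a locally compact Hausdorff space, and manifolds are such, is closed). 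Its image is the orbit relation $O=\{(g\cdot x,x)\mid g\in\mathcal{G},\,x\in\mathcal{X}\}$, which is thus closed in $\mathcal{X}\times\mathcal{X}$; the orbit $\mathcal{G}\cdot A=\{y\in\mathcal{X}\mid (y,A)\in O\}$ is the slice of the closed set $O$ over $A$ and is therefore closed. Equivalently, one can argue sequentially: if $g_n\cdot A\to y$, properness forces a subsequence $g_{n_k}\to g$, whence $y=g\cdot A\in\mathcal{G}\cdot A$.

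With $\mathcal{G}\cdot A$ closed, $H_A=\sigma^{-1}(\mathcal{G}\cdot A)$ is a closed subgroup of the Lie group $\mathcal{S}$, so the Closed-subgroup Theorem~\cite[Theorem~20.12]{lee_smooth_2012} yields that $H_A$ is an embedded Lie subgroup. Finally, if $\mathcal{S}$ is compact, then $H_A$, being a closed subset of a compact space, is compact. The main obstacle is the single nontrivial input---passing from properness to closedness of orbits---so I would take care to state precisely which form of properness is used (preimages of compact sets are compact, equivalently $\Theta$ proper) and to justify that a proper continuous map into a manifold is closed; every other step is formal once the orbit is known to be closed.
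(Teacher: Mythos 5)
Your proof is correct, but it takes a genuinely different route from the paper's. The paper works downstairs on the moduli space: properness gives that $\mathcal{X}/\mathcal{G}$ is Hausdorff \cite[Proposition~21.4]{lee_smooth_2012}, openness of the quotient map \cite[Lemma~21.1]{lee_smooth_2012} gives that the $\mathcal{S}$-action descends to a \emph{continuous} action on $\mathcal{X}/\mathcal{G}$, and then a limit-point argument shows that the stabilizer of $[A]$ under this continuous action on a Hausdorff space is closed. You instead stay upstairs in $\mathcal{X}$: properness gives that the single orbit $\mathcal{G}\cdot A$ is closed (via closedness of the proper map $\Theta(g,x)=(g\cdot x,x)$, or the sequential argument), and $H_A=\sigma^{-1}(\mathcal{G}\cdot A)$ is then the preimage of a closed set under the continuous orbit map $\sigma(s)=s\cdot A$. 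Your version is arguably more economical, since it sidesteps the quotient topology entirely --- there is no need to check that the descended action is continuous or that the quotient is Hausdorff, and verifying continuity of the descended action is the one step in the paper's proof that requires genuine care. What the paper's route buys in exchange is a fact of independent interest for its overall program: the symmetry action really does descend to a continuous action on the moduli space, which is the object the paper cares about throughout. You also spell out the subgroup verification via commutativity of the two actions, which the paper only asserts; both proofs then conclude identically with the Closed-subgroup Theorem and inheritance of compactness.
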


\begin{proof}
%
Suppose that $s$ is a limit point of $H_A$. Then there is a sequence $\{s_i\}_{i=1}^\infty\subseteq H_A$ such that $s_i\rightarrow s$. Consider $\{s_i\cdot [A]\}_{i=1}^\infty\subseteq \mathcal{X}/\mathcal{G}$. Note that, for all $i\in\mathbb{N}$, we have $s_i\cdot [A]=[A]$, as $s_i\in H_A$. As $\mathcal{G}$ acts properly, $\mathcal{X}/\mathcal{G}$ is Hausdorff, so limits are unique, meaning $s_i\cdot [A]\rightarrow [A]$~\cite[Proposition~21.4]{lee_smooth_2012}.

Due to the continuity of the action of $\mathcal{G}$ on $\mathcal{X}$, the quotient map $\pi\colon\mathcal{X}\rightarrow\mathcal{X}/\mathcal{G}$ is an open map~\cite[Lemma~21.1]{lee_smooth_2012}. It then follows that the smooth action of $\mathcal{S}$ descends to a continuous action on the moduli space $\mathcal{X}/\mathcal{G}$. Because of this continuity, we see that
\begin{equation*}
[A]=\lim_{i\rightarrow\infty} s_i\cdot [A]=\left(\lim_{i\rightarrow\infty} s_i\right)\cdot [A]=s\cdot [A].
\end{equation*}
Therefore, $s\in H_A$, so $H_A$ contains all its limit points. Hence, $H_A$ is closed.

Moreover, $H_A$ is a subgroup of $\mathcal{S}$. As a closed subgroup of a Lie group, by the Closed-subgroup Theorem $H_A$ itself is a closed Lie subgroup of $\mathcal{S}$~\cite[Theorem~20.12]{lee_smooth_2012}. Finally, if $\mathcal{S}$ is compact, then $H_A$ is as well.
\end{proof}

In particular, note that if $\mathcal{G}$ is compact, then it automatically acts properly. In this case, we can prove the above result another way. Indeed, recall the definition of $S_A$ from the proof of Theorem~\ref{thm:mainthm}. Even in this more general setting, $S_A$ is still closed. Projecting onto the first coordinate, we have $H_A=\pi_1(S_A)$. As $\mathcal{G}$ is compact, $\pi_1$ is a closed map, so $H_A$ is closed.

Proposition~\ref{prop:closed} requires a proper action. However, proving that an action is proper is challenging. It is much simpler when the group is compact. However, just as before, hope is not necessarily lost when you do not have compactness. Indeed, in previous work studying instantons with continuous symmetries, we do not have a compact gauge group. However, we are able to apply the same methods as above, when $\mathcal{G}$ is compact~\cite[Proposition~3.4]{lang_instantons_2024}.

As an example of the importance of Proposition~\ref{prop:closed}, suppose we look at objects symmetric under different connected Lie subgroups of $S^1\times S^1$. Excluding the trivial subgroup, we must consider the Lie subgroups $R_t:=\{(e^{i\theta},e^{it\theta})\mid \theta\in\mathbb{R}\}$, for all $t\in\mathbb{R}$, $R_\infty:=\{1\}\times S^1$, and $S^1\times S^1$. 

When $t\in\mathbb{Q}$, $R_t\simeq S^1$. Otherwise, $R_t\simeq \mathbb{R}$, which is a problem as $\mathbb{R}$ is not compact. Proposition~\ref{prop:R} allows us to study objects symmetric under $R_t$, for any $t$, as $\mathbb{R}$ is one-dimensional. However, Proposition~\ref{prop:closed} tells us that, in our case, we can ignore those symmetries that are not compact. Indeed, when $t\notin\mathbb{Q}$, $R_t$ is dense in $S^1\times S^1$, so an element in the moduli space is fixed by $R_t$ if and only if it is fixed by $S^1\times S^1$. Thus, we are only left with compact and connected Lie groups for symmetry groups, exactly those we can study using Theorem~\ref{thm:mainthm}.

\section*{Acknowledgements}

Many thanks to Benoit Charbonneau for many useful discussions along the path to Theorem~\ref{thm:mainthm} as well as Ben Webster and Tom Baird for providing additional references of group actions on moduli spaces. 

\bibliographystyle{halpha}
\bibliography{./Files/Bibliography/bibliography.bib}
\end{document}